\newcommand\scopeArrow[2] {\begin{scope}[decoration={markings,mark=at position #1
                 with \arrow{#2}}]}  % put arrowhead of shape #2 at relative position #1
             \newcommand\Cite[2] {\cite[#1]{#2}}
\newif\if@fewtab\@fewtabtrue
\xdef\hourmin{\number\count255}
\xdef\hourmin{\hourmin:\ifnum\count255<10 0\fi\the\count255}}
\def\ps@draft{\let\@mkboth\@gobbletwo
    \def\@oddfoot{\hbox to 7 cm{\tiny \versionno
       \hfil}\hskip -7cm\hfil\rm\thepage \hfil {\tiny\draftdate}}
    \def\@oddhead{}
    \def\@evenhead{}\let\@evenfoot\@oddfoot}
\def\draftdate{\number\month/\number\day/\number\year\ \ \ \hourmin }
\def\citen#1{\if@filesw \immediate\write \@auxout {\string\citation{#1}}\fi%
\@tempcntb\m@ne \let\@h@ld\relax \def\@citea{}%
\@for \@citeb:=#1\do {\@ifundefined {b@\@citeb}%
    {\@h@ld\@citea\@tempcntb\m@ne{\bf ?}%
    \@warning {Citation `\@citeb ' on page \thepage \space undefined}}%
    {\@tempcnta\@tempcntb \advance\@tempcnta\@ne
    \setbox\z@\hbox\bgroup\ifcat0\csname b@\@citeb \endcsname \relax
    \egroup \@tempcntb\number\csname b@\@citeb \endcsname \relax
    \else \egroup \@tempcntb\m@ne \fi \ifnum\@tempcnta=\@tempcntb
    \ifx\@h@ld\relax \edef \@h@ld{\@citea\csname b@\@citeb\endcsname}%
    \else \edef\@h@ld{\hbox{--}\penalty\@highpenalty
    \csname b@\@citeb\endcsname}\fi
    \else \@h@ld\@citea\csname b@\@citeb \endcsname \let\@h@ld\relax \fi}%
\def\@citea{,\penalty\@highpenalty\hskip.13em plus.13em minus.13em}}\@h@ld}
\def\@citex[#1]#2{\@cite{\citen{#2}}{#1}}%
\def\@cite#1#2{\leavevmode\unskip\ifnum\lastpenalty=\z@\penalty\@highpenalty\fi%
  \ [{\multiply\@highpenalty 3 #1%
  \if@tempswa,\penalty\@highpenalty\ #2\fi}]}   %
\def\act           {\,{.}\,}
\def\Act           {{.}}
\def\be            {\begin{equation}}
\def\bearl         {\begin{array}{l}}
\def\bearll        {\begin{array}{ll}}
\def\bfF           {{g}}
\def\bfH           {{h}}
\def\boti          {\,{\boxtimes}\,}
\def\C             {{\ensuremath\calc}}
\def\cala          {{\mathcal A}}
\def\Cala          {{\!\mathcal A}}
\def\calb          {{\mathcal B}}
\def\calc          {{\mathcal C}}
\def\calcm         {\calc^{\star_{\phantom l}}_\Calm}
\def\calm          {{\mathcal M}}
\def\Calm          {{\!\mathcal M}}
\def\calmopp       {{\calm\opp_{\phantom|}}}
\def\caln          {{\mathcal N}}
\def\calz          {{\mathcal Z}}
\def\cb            {\beta}       % braiding - do not use c, which stands for objects
\def\cB            {\gamma}      % half-braiding
\def\Cb            {{\ensuremath{{\mathcal C}^\text{rev}_{}}}}
\def\cir           {\,{\circ}\,}
\def\coendint      {\int^\bullet}
\def\coevl         {\mathrm{coev}^{\rm l}}
\def\coevr         {\mathrm{coev}^{\rm r}}
\def\Colon         {:\quad}
\def\coNat         {\mathrm{coNat}}
\def\dsty          {\displaystyle }
\def\deltahat      {\hat\delta}
\def\ee            {\end{equation}}
\def\eear          {\end{array}}
\def\End           {\mathrm{End}}
\def\EndC          {\mathrm{End}_\calc}
\def\Enumerate     {\def\leftmargini{1.34em}~\\[-1.42em]\begin{enumerate}}
\def\eq            {\,{=}\,}
\def\evk           {\underline{\mathrm{ev}}{}^{\ko}}
\def\evl           {\mathrm{ev}^{\rm l}}
\def\evr           {\mathrm{ev}^{\rm r}}
\def\FC            {\mathrm F_{\!\calc}^{}}
\def\FM            {\mathrm F_{\!\calm}}
\def\Fun           {{\mathcal Fun}}
\def\Funre         {{\mathcal Rex}}
\def\Funle         {{\mathcal Lex}}
\def\GC            {{\ensuremath{\Xi_\C}}}
\def\hmuver        {\widehat\mu_{\rm ver}}
\def\Hom           {\mathrm{Hom}}
\def\HomC          {\ensuremath{\Hom_\calc}}
\def\HomM          {\ensuremath{\Hom_\calm}}
\def\icHom         {\underline{\mathrm{co}\Hom}}
\def\icoev         {\underline{\mathrm{coev}}}
\def\id            {{\mathrm{id}}}
\def\Id            {{\mathrm{Id}}}
\def\IdC           {\mathrm{Id}_\calc}
\def\IdM           {\mathrm{Id}_\calm}
\def\iHom          {\underline{\Hom}}
\def\iHomM         {\underline{\Hom}_\calm}
\def\iev           {\underline{\mathrm{ev}}}
\def\imuhor        {\underline\mu{}_{\rm hor}}
\def\imuver        {\underline\mu{}_{\rm ver}}
\def\iN            {\,{\in}\,}
\def\endint    {\int_\bullet}
\def\itr           {\underline{\tr}}
\def\izc           {\reflectbox{$\izm$}}
\def\izm           {\zeta}
\def\Itemize       {\def\leftmargini{1.05em}~\\[-1.42em]\begin{itemize}}
\def\Itemizeiii    {\def\leftmargini{2.14em}~\\[-1.52em]\begin{itemize}
                   \addtolength\itemsep{-6pt}}
\def\jc            {\widehat\jmath}
\def\ko            {{\ensuremath{\Bbbk}}}
\def\M             {{\ensuremath\calm}}
\def\muhor         {\mu_{\text{hor}}}
\def\muiHom        {\underline\mu}
\def\muver         {\mu_{\rm ver}}
\def\N             {{\ensuremath\caln}}
\def\Nat           {\mathrm{Nat}}
\def\Nl            {{\rm N}^{\rm l}}
\def\Nr            {{\rm N}^{\rm r}}
\def\NrR           {{\rm S}^{\rm r}_{\Funre}}
\newcommand\Nxl[1] {\\[-1.3em]\\[#1mm]}
\def\Ol            {}  %%  no \overline
\def\one           {{\bf1}}
\def\opp           {^\text{opp}}
\def\oti           {\,{\otimes}\,}
\def\otik          {\,{\otimes_\ko}\,}
\def\Phile         {\Phi^{\rm l}}
\def\Phire         {\Phi^{\rm r}}
\def\Psile         {\Psi^{\rm l}}
\def\Psire         {\Psi^{\rm r}}
\def\ra            {^{\rm r.a.}}
\newcommand\rarr[1]{\xrightarrow{~#1~}}
\newcommand\Rarr[1]{\,{\xrightarrow{\,#1\,}}\,}
\def\relcNat       {\underline{\coNat}}
\def\relcNatend    {\underline{\coNat}'}
\def\relid         {\underline{\id}}
\def\relNat        {\underline{\Nat}}
\def\relNatend     {\underline{\Nat}'}
\def\rhohat        {\hat\rho}
\def\Sl            {{\rm S}^{\rm l}}
\def\Sr            {{\rm S}^{\rm r}}
\def\tildepi       {\pi}
\def\Times         {\,{\times}\,}
\def\tmuhor        {\widetilde\muhor}
\def\tmuver        {\widetilde\muver}
\def\To            {\,{\to}\,}
\def\tr            {\mathrm{tr}}
\def\vect          {\ensuremath{\mathrm{vect}}}
\def\Vee           {{}^{\vee\!}}
\newcommand\xarr[1]{\xrightarrow{~#1\,}}
\def\ZA            {{\ensuremath{\calz(\cala)}}}
\def\Zc            {\reflectbox{$\Zm$}}
\def\ZC            {{\ensuremath{\calz(\calc)}}}
\def\ZCM           {\calz(\calcm)}
\def\Zm            {{\mathrm Z}}
\newtheorem{thm}{Theorem}
\newtheorem{cor}[thm]{Corollary}
\newtheorem{lem}[thm]{Lemma}
\newtheorem{prop}[thm]{Proposition}
\theoremstyle{definition}
\newtheorem{Example}[thm]{Example}
\newtheorem{defi}[thm]{Definition}
\newtheorem{Definition}[thm]{Definition}
\newtheorem{rem}[thm]{Remark}
\newtheorem{Lemma}[thm]{Lemma}
\begin{document}

 \numberwithin{equation}{section}

\thispagestyle{empty}
\begin{flushright}
   {\sf ZMP-HH/20-16}\\
   {\sf Hamburger$\;$Beitr\"age$\;$zur$\;$Mathematik$\;$Nr.$\;$865}\\[2mm] August 2020
\end{flushright}

\vskip 2.0em

\begin{center}
{\bf \Large Internal natural transformations}
\\[7pt]
{\bf \Large and Frobenius algebras in the Drinfeld center}

\vskip 18mm

{\large \  \ J\"urgen Fuchs\,$^{\,a} \quad$ and $\quad$ Christoph Schweigert\,$^{\,b}$ }

\vskip 12mm

 \it$^a$
 Teoretisk fysik, \ Karlstads Universitet\\
 Universitetsgatan 21, \ S\,--\,651\,88\, Karlstad
 \\[9pt]
 \it$^b$
 Fachbereich Mathematik, \ Universit\"at Hamburg\\
 Bereich Algebra und Zahlentheorie\\
 Bundesstra\ss e 55, \ D\,--\,20\,146\, Hamburg

\end{center}

\vskip 3.2em

\noindent{\sc Abstract}\\[3pt]
For \M\ and \N\ finite module categories over a finite tensor category \C,
the category $\Funre_\C(\M,\N)$ of right exact module functors is a finite module
category over the Drinfeld center \ZC. We study the internal Homs of this module
category, which we call internal natural transformations. With the help of 
certain integration functors that map \C-\C-bimodule functors to objects of \ZC,
we express them as ends over internal Homs and define horizontal and vertical 
compositions. We show that if \M\ and \N\ are exact \C-modules and \C\ is pivotal,
then the \ZC-module $\Funre_\C(\M,\N)$ is exact. We compute its
relative Serre functor and show that if \M\ and \N\ are even
pivotal module categories, then $\Funre_\C(\M,\N)$ is pivotal as well.
Its internal Ends are then a rich source for Frobenius algebras in \ZC.

%%%%%%%%%%%%%%%%%%%%%%%%%%%%%%%%%%%%%%%%%%%%%%%%%%%%%%%%%%%%%%%%%%%%%%%%

% \newpage \tableofcontents

\newpage
%%%%%%%%%%%%%%%%%%%%%%%%%%%%%%%%%%%%%%%%%%%%%%%%%%%%%%%%%%%%%%%%%%%%%%%%

\section{Introduction}

Module categories over monoidal categories have been a prominent topic in
representation theory in the past two decades. The theory is particularly
well-developed for finite tensor categories and their finite module and bimodule
categories. Indeed, many notions and results in the theory of finite-dimensional
representations over finite-dimensional Hopf algebras have found their natural
conceptual home in this setting. Examples of such notions include the unimodularity
of a finite tensor category and factorizability of a braided finite tensor
category. Results include Radford's $S^4$-formula \cite{etno2}, including its
generalization to bimodule categories \cite{fScS2}, the equivalence of various
characterizations of the non-degeneracy of a braiding on a finite tensor category
\cite{shimi10}, and the theory of `reflections' of Hopf algebras \cite{balS}. Moreover,
module and bimodule categories have been used intensively in the study of subfactors, of
two-dimensional conformal field theory, and of three-dimensional topological field theory.

The following fact about module categories is well known. Let \C\ be a finite tensor
category and $\M$ and $\N$ be finite \C-modules. Then the category $\Funre_\C(\M,\N)$
of right exact module functors is a finite module category over the Drinfeld center
$\ZC$ (which is a finite tensor category). In this paper we study the internal Homs
$\iHom_\ZC(G,H)$ for $G,H \iN \Funre_\C(\M,\N)$. We denote these 
internal Homs by $\relNat(G,H) \iN\ZC$ and call them \emph{internal natural transformations}.

For the vector space of ordinary natural transformation between two linear functors, 
the Yoneda lemma implies a useful formula in terms of an end over morphism spaces:
  \be
  \Nat(G,H) \,= \int_{\!m\in\M}\! \Hom_\N(G(m),H(m)) \,.
  \ee
The structure morphisms $\Nat(G,H) \To \Hom_\N(G(m),H(m))$
of this end just give the components of the natural transformation.
One of the main results of this paper, Theorem \ref{thm:main1}, is a similar expression 
  \be
  \relNat(F,G) \,= \int_{\!m\in\M} \iHom_\N(F(m),G(m))
  \ee
for the internal natural transformations as objects in \ZC. In particular, we show that 
the end on the right hand side has a natural structure of an object in the Drinfeld center. 

A crucial ingredient that allows us to obtain this result are two functors
  \be
  \mbox{$\endint$}: ~~ \Fun_{\C|\C} ({}^{\#\!\!}\calm \boti \calm, \C) \to \ZC
  \qquad\text{and}\qquad
  \mbox{$\coendint$}: ~~ \Fun_{\C|\C} (\calm^\#\boti\calm, \C) \to \ZC 
  \ee
given by
  \be
  \mbox{$\endint$}:~~ G \,\mapsto \int_{\!m\in\calm}\! G(m,m) \qquad\text{and}\qquad
  \mbox{$\coendint$}:~~ H \,\mapsto \int^{m\in\calm}\!\! H(m,m)
  \label{eq:intro:intfu}
  \ee
for $G \iN \Fun_{\C|\C} ({}^{\#\!\!}\calm \boti \calm, \C)$ and
$H \iN \Fun_{\C|\C} (\calm^\#\boti\calm, \C)$, respectively, where ${}^{\#\!\!}\calm$
and $\calm^\#$ are two right \C-module structures on the opposite category $\calmopp$.
The existence of these functors, which we call \emph{central integration functors},
is shown in Theorem \ref{thm:integralprop}.

Since the internal natural transformations are internal Homs,
they come 
with associative compositions. It follows in particular that for any module functor $F$ the 
object $\relNat(F,F)$ has a natural structure of a unital associative algebra in \ZC. We 
show that the structure morphisms $\relNat(G,H) \To \iHom_\N(G(m),H(m))$
of the end behave in the same way as the component maps of an ordinary 
natural transformation.
This allows us to define horizontal and vertical compositions which obey
the Eckmann-Hilton relation. As a consequence, the object $\relNat(\Id_\M,\Id_\M)$ 
of internal natural endotransformations of the identity functor is a commutative algebra
in the braided category \ZC.

We also study the situation that the monoidal category \C\ has the additional
structure of a \emph{pivotal} tensor category. (This endows its Drinfeld
center \ZC\ with a pivotal structure as well.)  Moreover, we assume that
the module categories under investigation are now \emph{exact} \C-modules.
As we show in Proposition \ref{prop:intpivotal}, in this case the two central integration 
functors \eqref{eq:intro:intfu} are related by the Nakayama functor $\Nr_\M \iN \Funre(\M,\M)$
according to
  \be
  \mbox{$\coendint$} = \mbox{$\endint$} \,\circ \big( \Id_{\calmopp}\boti \Nr_\M\big) \,.
  \ee
Based on this result we show in Theorem \ref{thm:exactovercenter} that for any pair
$\M_1,\M_2$ of exact module categories over a pivotal finite tensor category \C, the category
$\Funre_\C(\M_1,\M_2)$ is an \emph{exact} \ZC-module. Specifically,
we compute its relative Serre functor to be 
  \be
  \NrR = \Nr_{\N} \circ (D \act -) \circ \Nr_{\M} \,,
  \ee
with $D$ the distinguished invertible object of \C. In Corollary \ref{cor:pivotalovercenter} 
we then conclude that in case \C\ is unimodular, this exact module category is \emph{pivotal} (in the sense of Definition \ref{def:Mpivotal}).
It follows that in this case $\relNat(F,F)$ has the structure of
a Frobenius algebra, and in particular $\relNat(\Id_\M,\Id_\M)$ has the structure of a
commutative Frobenius algebra.
In this way, \C-module categories become a rich source of Frobenius algebras in \ZC.

\medskip

This paper is organized as follows. After setting the stage in Section \ref{sec:Background},
in Section \ref{sec:cifunctors} we study relations between bimodule functors with codomain
\C\ and the Drinfeld center of \C, which leads us to the notion of central integration functors.
Section \ref{sec:inat} deals with internal natural transformations.
In particular, in Section \ref{sec:inat-end} we explain how they can be expressed as
an end, and in Section \ref{sec:inat-compositions} we introduce and study their
horizontal and vertical compositions. Finally, in Section \ref{sec:e+p}
we combine these results with the theory of relative Serre functors and
pivotal module categories to examine exactness and pivotality of the
the functor category $\Funre_\C(\calm,\caln)$ as a module category over \ZC.

\medskip

A direct application of our results (and, in fact, also a major motivation for our
investigations) is in the description of bulk fields in rigid logarithmic two-dimensional
conformal field theories, i.e.\ conformal field theories whose chiral data are
described by a modular finite tensor category \C. This application
will be discussed in detail elsewhere. Here we content ourselves with
mentioning the basic idea. When \C\ is modular, then we have a braided equivalence
$\ZC \,{\simeq}\, \C^\text{rev}_{}\boti\C$. The algebra of bulk fields (or, more
generally, disorder and defect fields) in full local conformal field theory
can therefore be regarded as an object in \ZC.
 
The field algebras in local conformal field theories should be Frobenius algebras; this 
has e.g.\ been demonstrated for bulk algebras of rigid logarithmic conformal field theories 
in \cite{fuSc22}. It is also well known that there are different full local conformal field 
theories that share the same chiral data based on a given modular tensor category \C.
It has been established almost two decades ago that in case \C\ is semisimple, the datum that 
in addition to the chiral data is needed
to characterize a local conformal field theory is a (semisimple, indecomposable) \C-module 
category \cite{fffs3,fjfrs}. The results of Section \ref{sec:e+p} show that for \C\ 
not semisimple, a \emph{pivotal} indecomposable module category \M\ is a natural
candidate for such an additional datum.  Boundary conditions of the full conformal field
theory are then described by objects $m\iN \M$ and boundary fields by internal Homs
$\iHom(m,m')\iN\C$. By Theorem 3.15 of \cite{shimi20}, the algebra
$\iHom(m,m)$ is a symmetric Frobenius algebra for any $m\iN\M$. A right exact
module functor $G\iN \Funre_\C(\M_1,\M_2)$ describes a topological
defect line between local conformal field theories characterized by $\M_1$ and
by $\M_2$, respectively. It is then natural to propose that the defect fields that
change a defect line labeled by $G$ to a defect line labeled by $H$ are
given by $\relNat(G,H)\iN \ZC$. In particular, $\relNat(G,G)\in\ZC$
is a symmetric Frobenius algebra; as a special case, $\relNat(\Id_M,\Id_\M)\iN\ZC$ 
a commutative symmetric Frobenius algebra, as befits the space of bulk fields.
This proposal also leads to natural candidates for operator product expansions 
and passes non-trivial consistency checks.

%%%%%%%%%%%%%%%%%%%%%%%%%%%%%%%%%%%%%%%%%%%%%%%%%%%%%%%%%%%%%%%%%%%%%%%%

\section{Background}\label{sec:Background}

In this section we fix our notation and mention some pertinent structures and concepts.

\subsection{Basic concepts}

\paragraph{Monoidal categories.}

We denote the tensor product of a monoidal category by $\otimes$ and the monoidal
unit by $\one$, and the associativity and unit constraints by $\alpha$, $l$ and $r$,
i.e.\ a monoidal category is a quintuple
$\calc \eq (\calc,\otimes,\one,\alpha,l,r)$. For better readability of various formulas, we
sometimes take, without loss of generality, the tensor product to be strict, i.e.\ take the 
associator $\alpha$ and unit constraints $l$ and $r$ to be identities.

%%%%%%%%%%%%%%%%%%%%%%%%%%%%%%%%%%%%%%%%

\paragraph{Module categories.}

The notion of a (left) \emph{module category} $\calm$ over a monoidal category \C, or
\C-module, for short, categorifies the notion of module over a ring: There is an 
\emph{action functor} $\C \Times \M \To \M$, exact in its first variable, together with
a mixed associator and a mixed unitor that obey mixed pentagon and triangle relations.
For background on module categories, as well as module functors and module
natural transformations, see e.g.\ \Cite{Ch.\,7}{EGno} or \Cite{Sect.\,2.3}{shimi17}.
In the present paper, module categories will be left modules unless stated otherwise.
We denote the action morphism by a dot and the mixed associator by $a$, i.e.\ $a$ has components $a_{c,c',m}\colon (c\oti c')\act m \To c \act (c'\act m)$
with $c,c'\iN\C$ and $m\iN\calm$.
The natural isomorphism that defines the structure of a \C-module functor $G$ is denoted
by $\phi^G$, i.e.\ $\phi^G$ has components $\phi^G_{c,m}\colon G(c\Act m) \To c \act (G(m))$
with $c\iN\C$ and $m\iN\calm$.

%%%%%%%%%%%%%%%%%%%%%%%%%%%%%%%%%%%%%%%%

\paragraph{Finite categories.}

We fix an algebraically closed field \ko. A \emph{finite} \ko-linear category 
is an abelian category that is equivalent as abelian category to the category of
finite-dimensional modules over a finite-dimensional \ko-algebra.
A \emph{finite tensor category} is a finite \ko-linear category which is rigid monoidal
with appropriate compatibility conditions among the structures, see e.g.\ \cite{etos} 
or \Cite{Sect.\,2.5}{shimi17}. Since a finite tensor
category is rigid, its tensor product functor is exact.
Our conventions concerning dualities of a rigid category \C\ are as follows. 
The right dual of an object $c$ is denoted by $c^\vee$, and the right evaluation 
and coevaluation are morphisms
  \be
  \evr_c \in \Hom_\calc (c^\vee \oti c,\one) \qquad{\rm and}\qquad
  \coevr_c \in \Hom_\calc (\one ,c \oti c^\vee) \,,
  \ee
while the left evaluation and coevaluation are
  \be
  \evl_c \in \Hom_\calc (c \oti \Vee c,\one) \qquad{\rm and}\qquad
  \coevl_c \in \Hom_\calc( \one ,\Vee c \oti c)
  \ee
with $\Vee c$ the left dual of $c$.

A module category $\calm$ over a finite tensor category \C\ is called \emph{finite} iff
$\calm$ is a finite \ko-linear abelian category and the action of \C\ on $\calm$ is
linear and right exact in both variables. 
 % \Cite{Sect.\,2.5}{shimi17}.
The category of right exact module endofunctors of a finite module category $\calm$ over 
a finite tensor category \C\ is again a finite tensor category \Cite{Prop.\,7.11.6.}{EGno};
we denote it by $\calcm$.
A finite \C-module is called \emph{exact} iff $p\act m$ is projective in $\calm$
for each projective $p\iN\calc$ and each $m\iN\calm$.
In particular, $\C$ is an exact module category over itself \Cite{Def.\,3.1}{etos}.
Indecomposable exact module categories over $H$-mod, for $H$ a
finite-dimensional Hopf algebra, are classified in \Cite{Sect.\,3.2}{anMo}.
For recent results see also \cite{shimi20}.

%%%%%%%%%%%%%%%%%%%%%%%%%%%%%%%%%%%%%%%%

\paragraph{Drinfeld centers.}

For $\cala$ a monoidal category, a \emph{half-braiding} for an object $a_0\iN\cala$ 
is a natural family $\sigma \eq (\sigma_a)_{a\in \cala}$ of morphisms $\sigma_a
\colon a \oti a_0\To a_0 \oti a$ such that (suppressing the associator of $\cala$)
$\sigma_{a\otimes a'} \eq (\sigma_a \oti \id_{a'}) \cir (\id_a \oti \sigma_{a'})$
for all $a,a' \iN \cala$ and $\sigma_\one \eq \id_{a_0}$. 
The \emph{Drinfeld center} $\calz(\cala)$ of $\cala$ has as objects
pairs $(a,\sigma)$ consisting of an object of $\cala$ and a half-braiding on it.
The morphisms $\Hom_{\ZA}((a,\sigma),(a',\sigma'))$ are those morphisms
$a\,{\xrightarrow{~f\,}}\, a'$ in $\cala$ that satisfy
$(f \oti \id_b) \cir \sigma_b^{} \eq \sigma'_b \cir (\id_b \oti f)$ for all $b\iN\cala$. 
For any monoidal category $\cala$, the Drinfeld center $\calz(\cala)$ 
has a natural braided monoidal structure.

%%%%%%%%%%%%%%%%%%%%%%%%%%%%%%%%%%%%%%%%

\paragraph{Unimodular categories.}

In any finite tensor category there is (uniquely up to isomorphism) a
\emph{distinguished invertible object} $D$,
an invertible object that comes \Cite{Thm\,3.3}{etno2} with
coherent isomorphisms $D \oti x \,{\cong}\, x^{\vee\vee\vee\vee} {\otimes}\, D$.
A \emph{unimodular} finite tensor category is a finite tensor category $\cala$ for which
the distinguished invertible object is the monoidal unit.
There are several equivalent characterizations of unimodularity \cite{shimi7}, e.g.\
the forgetful functor $U\colon \calz(\cala)\,{\to}\,\cala$ from the Drinfeld center
is a Frobenius functor.
    
%%%%%%%%%%%%%%%%%%%%%%%%%%%%%%%%%%%%%%%%

\paragraph{Modular categories.}

For \C\ a braided finite tensor category, we denote by \Cb\ its \emph{reverse}, i.e.\ the
same monoidal category, but with inverse braiding. There is a canonical braided functor
  \be
  \GC: \quad \Cb \boti \C \to \ZC
  \label{eq:CbC->ZC}
  \ee
from the \emph{enveloping category} of \C, i.e.\ the Deligne product of \Cb\ with \C\
(which exists, as \C\ is finite abelian), to the Drinfeld center of \C.
As a functor, \GC\ maps the object $\Ol u \boti v \iN \Cb \boti \C$ to the tensor product
$\Ol u \oti v \iN \C$ endowed with the half-braiding $\cB_{\Ol u\otimes v}$ that has components
$ \cB_{c;\Ol u\otimes v} \,{:=}\,
  ( \id_{\Ol u} \oti \cb_{v,c}^{-1} ) \cir (\cb_{c,\Ol u} \oti \id_v )$
for $c \iN \C$, with $\cb$ the braiding in \C.
The braided monoidal structure on the functor \GC\ is given by the coherent
family $\id_{\Ol u} \oti \cb_{v,\Ol x} \oti \id_y$ of isomorphisms from
$\Ol u \oti v \oti \Ol x \oti y$ to $\Ol u \oti \Ol x \oti v \oti y$.

A finite tensor category \C\ is \emph{non-degenerate} iff the functor $\GC$ is an
equivalence. If $\C$ is even a ribbon category, then \Cb\ is a ribbon category with
the inverse twist. A non-degenerate finite
ribbon category is a \emph{modular tensor category}, or modular category, for short.
Traditionally, the term modular category has been used under the additional
assumption that the finite tensor category \C\ is semisimple, i.e.\ a
fusion category; in our context, such a restriction is not natural.
A modular category is in particular unimodular.

%%%%%%%%%%%%%%%%%%%%%%%%%%%%%%%%%%%%%%%%

\paragraph{The central monad and comonad.}

The Drinfeld center comes with a forgetful functor $U\colon \ZC\,{\to}\,\calc$ that omits the
half-braiding. $U$ is exact and hence has a left and a right
adjoint. These adjunctions are (co)monadic and thus give rise to a monad
  \be 
  % [inline block 0: 1 envs, 30223 chars -> data_tex | \begin{array}{rl}   \Zm \Colon \calc & \!\! \xrightarrow{~~~\,} \,\calc \\...]

  \label{eq:cifs}
  \ee
respectively.
\end{thm}

We call the functors \eqref{eq:cifs} the \emph{central integration functors} for 
the \C-module \M.

%%%%%%%%%%%%%%%%%%%%%%%%%%%%%%%%%%%%%%%%%%%%%%%%%%%%%%%%%%%%%%%%%%%%%%%%

\section{Internal natural transformations}\label{sec:inat}

\subsection{Definition}

Let $\calc$ be a finite tensor category and $\calm$ and $\caln$ be left $\calc$-mo\-du\-les. 
It is well known that the category $\Funre_\calc(\calm,\caln)$ of right exact module 
functors is a finite category, and in fact has a natural structure of a finite $\ZC$-module 
as follows: For $(c_0,\beta_0)\iN\ZC$ and $F\iN\Funre_\calc(\calm,\caln)$ the functor
  \be
  % [inline block 1: 2 envs, 48273 chars -> data_tex | \begin{array}{rll}   \calm &\!\! \xrightarrow{~~} \!\!& \caln \,,...]

  \ee
In case \C\ is \emph{unimodular}, i.e.\ $D \eq \one$, this gives us a family of isomorphisms
  \be
  \pi^\N_{F\circ\Nr_\M(m)} \circ F(\pi^\M_m) : \quad 
  F(m) \rarr\cong \Nr_\N \cir F \cir \Nr_\M(m) = \NrR(F)(m) 
  \ee	  
which provides an isomorphism
  \be
  \Id_{\Funre_\C(\calm,\caln)} \rarr\cong \NrR
  \label{eq:FtoNrR(F)}
  \ee
of endofunctors of $\Funre_\C(\calm,\caln)$.
We then arrive at  

\begin{cor}\label{cor:pivotalovercenter}
Let \C\ be a unimodular pivotal finite tensor category and $\calm$ and $\caln$ be pivotal
module categories over \C. Then the functor category $\Funre_\C(\calm,\caln)$ 
has a structure of a pivotal module category over the pivotal finite tensor category $\ZC$.
\end{cor}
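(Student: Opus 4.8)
\textbf{Proof strategy for Corollary \ref{cor:pivotalovercenter}.}
By Theorem \ref{thm:exactovercenter} the category $\Funre_\C(\calm,\caln)$ is an exact module category over the finite tensor category $\ZC$, with relative Serre functor given by \eqref{eq:def:NrR}; since $\C$ is unimodular we have $D\cong\one$, so this Serre functor is simply $\NrR=\Nr_\N\cir(-)\cir\Nr_\M$ (cf.\ Remark \ref{rem:S-vs-N}). Moreover the computation carried out just before the corollary produces the natural isomorphism \eqref{eq:FtoNrR(F)}, which we denote by $\tildepi^{\Funre}\colon\Id_{\Funre_\C(\calm,\caln)}\To\NrR$; its component at $F\iN\Funre_\C(\calm,\caln)$ is the module natural transformation with components $\pi^\N_{F\circ\Nr_\M(m)}\cir F(\pi^\M_m)$ for $m\iN\calm$, and that same computation records that $\NrR(F)$ is again a genuine (non-twisted) $\C$-module functor, that $\NrR$ carries the twisted $\ZC$-module structure $\phi^{\Funre}_{z,F}$ of \eqref{eq:tmp5}, and that $\tildepi^{\Funre}$ is natural in $F$. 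In view of Definition \ref{def:Mpivotal}, all that remains is to check that $\tildepi^{\Funre}$ is compatible with $\phi^{\Funre}$ and with the pivotal structure $\pi^{\ZC}$ that $\ZC$ inherits from $(\C,\pi)$ (recalled in Section \ref{sec:Background}), i.e.\ that $\phi^{\Funre}_{z,F}\cir\tildepi^{\Funre}_{z.F}=\pi^{\ZC}_z\act\tildepi^{\Funre}_F$ for every $z\iN\ZC$ and $F\iN\Funre_\C(\calm,\caln)$.

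I would establish this identity by evaluating both sides at an arbitrary object $m\iN\calm$ and chasing the resulting diagram in $\caln$. Writing $z=(\dot z,\beta)$, the inputs are: (a) the definition of $\phi^{\Funre}_{z,F}$ as the composite displayed in the ``one step shorter'' computation above the corollary, built from the twisted module-functor data $\phi^{\Nr_\M}$, $\phi^{\Nr_\N}$ of the Nakayama functors, the module-functor datum $\phi^F$ of $F$, the mixed associator of $\caln$, and the Radford $S^4$-isomorphism $\dot z^{\vee\vee\vee\vee}\cong\dot z$ (which for unimodular $\C$ is conjugation by $D\cong\one$); (b) the description \eqref{eq:modfun} of $z.F$ as a module functor, namely $\phi^F$ followed by the half-braiding $\beta$, so that $\tildepi^{\Funre}_{z.F}$ evaluated at $m$ unwinds into components of $\pi^\N$, $\pi^\M$, $\phi^F$ and $\beta$; and (c) the fact that the underlying $\C$-morphism of $\pi^{\ZC}_z$ is $\pi_{\dot z}$. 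Using (a)--(c) both composites expand into strings of components of $\pi$, $\pi^\M$, $\pi^\N$ together with module and associativity constraints.

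These two strings are then identified by repeatedly invoking naturality of $\pi^\N\colon\Id_\N\To\Sr_\N$ (to commute $F(\pi^\M_m)$ and the module constraints past $\pi^\N$), the pivotality axiom of $\calm$, namely $\phi^{\Sr_\M}_{c,m}\cir\pi^\M_{c.m}=\pi_c\act\pi^\M_m$, and its analogue for $\caln$ (this is exactly the point at which the hypotheses that $\calm$ and $\caln$ are pivotal module categories enter), together with the compatibility of $\phi^F$ with double duals and with the mixed associator. Finally, monoidality of the pivotal structure $\pi$ on $\C$, $\pi_{c\oti c'}\cong\pi_c\oti\pi_{c'}$, is what lets the Radford isomorphism on $\dot z^{\vee\vee\vee\vee}$ be recognized as $\pi_{\dot z^{\vee\vee}}\cir\pi_{\dot z}$ (up to coherence), producing the single factor $\pi_{\dot z}$ that the right-hand side demands; after these reductions the two sides coincide. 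I expect this last bookkeeping to be the main obstacle: one must align the twist-by-$(-)^{\vee\vee}$ built into $\NrR$ (via $\phi^{\Nr_\M}$, $\phi^{\Nr_\N}$ and the Radford isomorphism) with the twist-by-$(-)^{\vee\vee}$ built into $\pi^{\ZC}$, and pin down the standard-but-delicate coherence fact that, in the pivotal unimodular setting, the Radford $S^4$-isomorphism used to define $\phi^{\Funre}$ is indeed the square of the pivotal structure.

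Granting this, the statement follows: $\ZC$ is a pivotal finite tensor category, $\Funre_\C(\calm,\caln)$ is an exact $\ZC$-module by Theorem \ref{thm:exactovercenter}, and $\tildepi^{\Funre}$ is a pivotal structure on it in the sense of Definition \ref{def:Mpivotal}. (As an alternative route one could instead transport a pivotal structure along the $\ZC$-module equivalence $\Phire\colon\calz(\overline\calm\boti\caln)\To\Funre_\C(\calm,\caln)$ of \eqref{eq:moduleEW}, but the direct verification above is the one prepared by the preceding computations.)
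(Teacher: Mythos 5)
Your proposal is correct and follows essentially the same route as the paper: both take the isomorphism \eqref{eq:FtoNrR(F)} built from $\pi^\M$ and $\pi^\N$ as the candidate pivotal structure on $\Funre_\C(\calm,\caln)$ and reduce the problem to the compatibility condition of Definition \ref{def:Mpivotal} with the twisted module structure \eqref{eq:tmp5} and the inherited pivotal structure on $\ZC$. The only difference is in how that final condition is dispatched: where you sketch an explicit component-wise diagram chase (including the alignment of the Radford isomorphism with $\pi^2$, which unimodularity indeed renders harmless), the paper takes \C\ strict pivotal and uses the isomorphisms $\Sr_\M\To\Id_\M$, $\Sr_\N\To\Id_\N$ to trivialize the Nakayama functors outright, after which the condition collapses to a combination of identities.
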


  % would get an "if and only if unimodular" if we were able to show that
  % $D.F \eq F$ for all $F$ implies $D\eq 1$.

\begin{proof}
By unimodularity of \C\ we have $\Nr_\M \eq \Sr_\M$ and $\Nr_\N \eq \Sr_\N$.
As in the proof of Proposition \ref{prop:intpivotal} let us take, without loss of generality,
the pivotal structure of \C\ to be strict. Then upon using the isomorphisms $\Sr_\M \To \Id_\M$
and $\Sr_\N \To \Id_\N$ that come from the pivotal structure on $\M$ and $\N$, respectively,
to trivialize 
the relative Serre and thus Nakayama functors of \M\ and \N, the consistency condition
that according to Definition \ref{def:Mpivotal} has to be met in order for the isomorphism 
\eqref{eq:FtoNrR(F)} to be a pivotal structure on $\Funre_\C(\calm,\caln)$ reduces to a
combination of identities and is thus trivially satisfied. We refrain from spelling out the
explicit form that the consistency condition takes after inserting the unique isomorphisms
involved in those trivializations, as it does not add any further insight.
\end{proof}

By combining Corollary \ref{cor:pivotalovercenter} with Theorem \ref{shimi20-Thm.3.15} we 
further get

\begin{cor}
Let \C\ be a unimodular pivotal finite tensor category, let \M\ and \N\ be exact \C-modules
with pivotal structures, and let $F\colon \calm\To\caln$ a module functor. Then
the algebra $\relNat(F,F)$ has a natural structure of a symmetric Frobenius algebra in the Drinfeld center \ZC. In particular, $\relNat(\Id_\M,\Id_\M)$ 
has a natural structure of a commutative symmetric Frobenius algebra.
\end{cor}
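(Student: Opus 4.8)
The plan is to feed the $\ZC$-module category $\Funre_\calc(\calm,\caln)$ into the results already established and then quote Theorem~\ref{shimi20-Thm.3.15}. First I would record that, by Theorem~\ref{thm:exactovercenter}, $\Funre_\calc(\calm,\caln)$ is an exact module category over the finite tensor category $\ZC$, while by Corollary~\ref{cor:pivotalovercenter} (which is where the unimodularity of \C\ and the pivotality of \C, \M\ and \N\ all get used) it carries a pivotal structure as a module category over $\ZC$; recall also that $\ZC$ is itself a pivotal finite tensor category because \C\ is, cf.\ the discussion after Remark~\ref{rem:ngsc}. Hence $\Funre_\calc(\calm,\caln)$, equipped with this pivotal structure, is a pivotal exact module category over the pivotal finite tensor category $\ZC$ --- exactly the situation to which Theorem~\ref{shimi20-Thm.3.15} applies.

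Next I would apply that theorem with \C\ there replaced by $\ZC$ and $\calm$ there replaced by $\Funre_\calc(\calm,\caln)$: for every object $F\iN\Funre_\calc(\calm,\caln)$ the internal End $\iHom_{\Funre_\calc(\calm,\caln)}(F,F)$ is a symmetric Frobenius algebra in $\ZC$, with Frobenius form the composite of the morphism induced by the pivotal structure of $\Funre_\calc(\calm,\caln)$ with the internal trace of that module category. By the definition~\eqref{eq:def:relNat} this internal End is precisely $\relNat(F,F)$, and the algebra structure it carries is the one defined by the multiplication~\eqref{eq:mult} of internal Homs, i.e.\ the vertical composition $\imuver$. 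This establishes the first assertion.

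For the second assertion, specialize to $\caln\eq\calm$ and $F\eq\Id_\calm$. The first part then makes $\relNat(\Id_\calm,\Id_\calm)$ a symmetric Frobenius algebra in $\ZC$, while the corollary following Proposition~\ref{prop:EH4imuhor+imuver} shows that its multiplication is braided commutative. An algebra that is at the same time commutative and symmetric Frobenius is, by definition, a commutative symmetric Frobenius algebra, which is the claim.

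The argument is little more than an assembly of previous statements, so I expect no serious obstacle; the one point deserving a moment's care is the identification of the various algebra structures on $\relNat(F,F)$: the one coming from the vertical composition $\imuver$ (Proposition~\ref{prop:compos-inherited}), the one obtained by viewing $\relNat(F,F)$ as an internal End of the $\ZC$-module $\Funre_\calc(\calm,\caln)$, and the one supplied by Theorem~\ref{shimi20-Thm.3.15}. All of these are induced by the counit of the adjunction defining the internal Hom $\relNat(F,-)$ via the construction of~\eqref{eq:mult}, so they coincide and no extra computation is required.
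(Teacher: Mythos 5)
Your proposal is correct and is essentially identical to the paper's own argument, which derives the corollary ``by combining Corollary~\ref{cor:pivotalovercenter} with Theorem~\ref{shimi20-Thm.3.15}'' applied to the pivotal exact $\ZC$-module $\Funre_\calc(\calm,\caln)$, and obtains commutativity of $\relNat(\Id_\calm,\Id_\calm)$ from the Eckmann--Hilton corollary. Your extra remark on matching the algebra structures is a sensible (and correct) point of care that the paper leaves implicit.
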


%%%%%%%%%%%%%%%%%%%%%%%%%%%%%%%%%%%%%%%%%%%%%%%%%%%%%%%%%%%%%%%%%%%%%%%%

\vskip 3.5em

\noindent
{\sc Acknowledgements:}\\[.3em]
JF is supported by VR under project no.\ 2017-03836. CS is partially supported by the
RTG 1670 ``Mathematics inspired by String theory and Quantum Field Theory''
and by the Deutsche Forschungsgemeinschaft (DFG, German Research Foundation)
under Germany's Excellence Strategy - EXC 2121 ``Quantum Universe'' - 390833306.

%%%%%%%%%%%%%%%%%%%%%%%%%%%%%%%%%%%%%%%%%%%%%%%%%%%%%%%%%%%%%%%%%%%%%%%%
  \newpage

%\vskip 5em

\newcommand\wb{\,\linebreak[0]} \def\wB {$\,$\wb}
\newcommand\Bi[2]    {\bibitem[#2]{#1}}
\newcommand\inBo[8]  {{\em #8}, in:\ {\em #1}, {#2}\ ({#3}, {#4} {#5}), p.\ {#6--#7} }
\newcommand\inBO[9]  {{\em #9}, in:\ {\em #1}, {#2}\ ({#3}, {#4} {#5}), p.\ {#6--#7} {\tt [#8]}}
\newcommand\J[7]     {{\em #7}, {#1} {#2} ({#3}) {#4--#5} {{\tt [#6]}}}
\newcommand\JO[6]    {{\em #6}, {#1} {#2} ({#3}) {#4--#5} }
\newcommand\JP[7]    {{\em #7}, {#1} ({#3}) {{\tt [#6]}}}
\newcommand\Jpress[7]{{\em #7}, {#1} {} (in press) {} {{\tt [#6]}}}
\newcommand\BOOK[4]  {{\em #1\/} ({#2}, {#3} {#4})}
\newcommand\PhD[2]   {{\em #2}, Ph.D.\ thesis #1}
\newcommand\Prep[2]  {{\em #2}, preprint {\tt #1}}
\newcommand\uPrep[2] {{\em #2}, unpublished preprint {\tt [#1]}}
\def\adma  {Adv.\wb Math.}
\def\ajse  {Arabian Journal for Science and Engineering}
\def\alnt  {Algebra\wB\&\wB Number\wB Theory}          
\def\alrt  {Algebr.\wb Represent.\wB Theory}         
\def\apcs  {Applied\wB Cate\-go\-rical\wB Struc\-tures}
\def\comp  {Com\-mun.\wb Math.\wb Phys.}
\def\coma  {Con\-temp.\wb Math.}
\def\cpma  {Com\-pos.\wb Math.}
\def\duke  {Duke\wB Math.\wb J.}
\def\imrn  {Int.\wb Math.\wb Res.\wb Notices}
\def\jims  {J.\wb Indian\wb Math.\wb Soc.}
\def\jams  {J.\wb Amer.\wb Math.\wb Soc.}
\def\joal  {J.\wB Al\-ge\-bra}
\def\joms  {J.\wb Math.\wb Sci.}
\def\jopa  {J.\wb Phys.\ A}
\def\jktr  {J.\wB Knot\wB Theory\wB and\wB its\wB Ramif.}
\def\jpaa  {J.\wB Pure\wB Appl.\wb Alg.}
\def\mama  {ma\-nu\-scripta\wB mathematica\wb}
\def\mams  {Memoirs\wB Amer.\wb Math.\wb Soc.}
\def\momj  {Mos\-cow\wB Math.\wb J.}
\def\nupb  {Nucl.\wb Phys.\ B}
\def\pams  {Proc.\wb Amer.\wb Math.\wb Soc.}
\def\pspm  {Proc.\wb Symp.\wB Pure\wB Math.}
\def\quto  {Quantum Topology}
\def\sigm  {SIGMA}
\def\taac  {Theo\-ry\wB and\wB Appl.\wb Cat.}
\def\tams  {Trans.\wb Amer.\wb Math.\wb Soc.}
\def\toap  {Topology\wB Applic.}
\def\topo  {Topology}
\def\trgr  {Trans\-form.\wB Groups}

\small

%%%%%%%%%%%%%%%%%%%%%%%%%%%%%%%%%%%%%%%%%%%%%%%%%%%%%%%%%%%%%%%%%%%%%%%%
\end{document}